\newtheorem{thm}{Theorem}[section] 
\newtheorem{cor}[thm]{Corollary}
\newtheorem{defn}[thm]{Definition}
\newtheorem{prop}[thm]{Proposition}
\theoremstyle{definition}
\newtheorem{rem}[thm]{Remark}
\newcommand\operA[2]{{\if!#2!\operatorname{#1}\else{\operatorname{#1}_{#2}^{\phantom{I}}}\fi}} 
\newcommand\Pref[1]{{Proposition~\ref{#1}}}%
\newcommand\Cref[1]{{Corollary~\ref{#1}}}%
\newcommand\Rref[1]{{Remark~\ref{#1}}}%
\newcommand\Tref[1]{{Theorem~\ref{#1}}}%
\def\tr{{\operatorname{Tr}}}
\def\dim{{\operatorname{dim}}}
\def\norm{{\operatorname{N}}}
\newcommand{\Trace}[1][]{\if!#1!\operatorname{Tr}\else{\operatorname{Tr}_{#1}^{\phantom{I}}}\fi} 
\long\def\forget#1\forgotten{{}} %
\def\({\left(}
\def\){\right)}
\newif\iffurther
\newif\ifXY 
\journal{Archiv der Mathematik}
\begin{document}

\begin{frontmatter}

\title{Symbol $p$-Algebras of Prime Degree and their $p$-Central Subspaces}

\author{Adam Chapman}
\ead{adam1chapman@yahoo.com}
\address{Department of Computer Science, Tel-Hai College, Upper Galilee, 12208 Israel}
\author{Michael Chapman}
\ead{michael169chapman@gmail.com}

\address{Department of Mathematics, Ben-Gurion University of the Negev,
	P.O. Box 653, Be’er-Sheva 84105, Israel}

\begin{abstract}
We prove that the maximal dimension of a $p$-central subspace of the generic symbol $p$-algebra of prime degree $p$ is $p+1$. We do it by proving the following number theoretic fact: let $\{s_1,\dots,s_{p+1}\}$ be $p+1$ distinct nonzero elements in the additive group $G=(\mathbb{Z}/p \mathbb{Z}) \times (\mathbb{Z}/p \mathbb{Z})$; then every nonzero element $g \in G$ can be expressed as $d_1 s_1+\dots+d_{p+1} s_{p+1}$ for some non-negative integers $d_1,\dots,d_{p+1}$ with $d_1+\dots+d_{p+1} \leq p-1$.
\end{abstract}

\begin{keyword}
Central Simple Algebras, Symbol Algebras, Kummer Spaces, Generic Algebras, Zero Sum Sequences, Valuations on Division Algebras, Fields with Positive Characteristic
\MSC[2010] primary 16K20; secondary 16W60, 11B50, 12E15
\end{keyword}

\end{frontmatter}

\section{Introduction}

Let $p$ be a prime integer and let $F$ be a field.
We study symbol $p$-algebras of degree $p$, i.e. central simple algebras of degree $p$ over $F$ with $\operatorname{char}(F)=p$.
Such a symbol algebra is of the form
$$A=F \langle x,y : x^p-x=\alpha, y^p=\beta, y x y^{-1}=x+1 \rangle$$
for some $\alpha \in F$ and $\beta \in F^\times$.
We denote this algebra by the symbol $[\alpha,\beta)_{p,F}$.
It is a division algebra if and only if $F[x : x^p-x=\alpha]$ is a field extension of $F$ and $\beta$ is not a norm in this field extension. Otherwise it is isomorphic to the $p \times p$ matrix algebra $M_p(F)$ over $F$.
The $p$-torsion of $Br(F)$ is generated by such algebras (proven originally by Teichm\"uller, see \cite[Theorem 9.1.4]{GS} and \citep[Chapter 7, Theorem 30]{Albert}).
The fact that the $p$-torsion of $Br(F)$ is generated by symbol algebras in the case of $\operatorname{char}(F) \neq p$ and $F$ containing primitive $p$th roots of unity was proven only a few decades later in \cite{MS}.

An element $z \in A$ is called $p$-central if $z^p \in F$.
If $z$ is $p$-central and not central then one can write $A$ as $[\alpha,z^p)_{p,F}$ for some $\alpha \in F$.
These elements are therefore vital for understanding the structure of $A$ and the different symbol presentations it can take.
\begin{defn}
	An $F$-vector subspace of $A=[\alpha,\beta)_{p,F}$ consisting only of $p$-central elements is called a $p$-central subspace of $A$.
\end{defn}
A key example of a $p$-central subspace of $A$ is $F[x]y=F y+F xy+\dots+F x^{p-1} y$.
For any nonzero $z=f(x) y \in F[x]y$, one can write $A=[\alpha,z^p)_{p,F}=[\alpha,\operatorname{N}_{F[x]/F}(f(x)) \beta)_{p,F}$ (see \citep[Chapter 7, Lemma 10]{Albert}).
This symbol modification explains why $\beta$ must not be a norm in order for the algebra to be a division algebra:
if $\beta$ is the norm of some $f(x)$ then for $z=f(x)^{-1} y$ we get $A=[\alpha,\operatorname{N}_{F[x]/F}(f(x)^{-1}) \beta)_{p,F}=[\alpha,1)_{p,F}$ which contains a nilpotent element and thus is clearly not a division algebra.
This treatment of $p$-central spaces was extended in \cite{Chapman:sub} to tensor products of symbol algebras in order to bound the symbol length of algebras of exponent $p$ over fields with a prescribed upper bound on the dimension of anisotropic polynomial forms of degree $p$, following the example of \cite{Matzri} that treated such spaces in the case of $\operatorname{char}(F) \neq p$ and $F$ containing primitive $p$th roots of unity. 

We are interested in the $p$-central subspaces of $A$ and above all in their maximal dimension.
We conjecture that the maximal dimension is $p+1$, noting that one can extend the key example mentioned above to the $(p+1)$-dimensional $p$-central space $F[x] y+F$.
This is known to be true when $p=2$ or 3: for $p=2$ it is enough to notice that the subspace of elements of trace zero is 3-dimensional; for $p=3$ see \cite[Theorem 6.1]{MV2}.

In this paper, we prove the conjecture in the ``generic case", i.e. for a symbol algebra $[\alpha,\beta)_{p,F}$ where $F$ is either the function field $K(\alpha,\beta)$ in two algebraically independent variables $\alpha$ and $\beta$ or the field $K((\alpha^{-1}))((\beta^{-1}))$ of iterated Laurent series over some field $K$ with $\operatorname{char}(K)=p$.
An equivalent statement was proven in the case of $\operatorname{char}(F) \neq p$ and $F$ containing primitive $p$th roots of unity in \cite{CGMRV}.
We prove the main statement by reducing the problem into a number theoretic question and answering this question independently.

\section{Preliminaries}

\subsection{The trace and norm forms}
Let $p$ be a prime integer and let $F$ be a field with $\operatorname{char}(F)=p$.
Let $A=[\alpha,\beta)_{p,F}=F \langle x,y : x^p-x=\alpha, y^p=\beta, y x y^{-1}=x+1 \rangle$ be a symbol $p$-algebra of degree $p$ over $F$.
For any maximal subfield $E$ of $A$, the algebra $A \otimes E$ is isomorphic to $M_p(E)$.
There is therefore a natural embedding of $\Phi: A \hookrightarrow M_p(E)$.
The trace and determinant of any element in $\Phi(A)$ are in $F$ (see \cite[Section 2.6]{GS}).
We can therefore consider the trace form $\tr : A \rightarrow F$ mapping each $\lambda \in A$ to $\tr(\Phi(\lambda))$, and the norm form $\norm : A \rightarrow F$ mapping each $\lambda$ to $\det(\Phi(\lambda))$.
In particular, the identity element $1$ in $F$ is mapped to the identity matrix in $M_p(E)$ whose trace is $p$, i.e. $0$.
Note that $\norm(zt)=\norm(z) \norm(t)$, $\tr(z+t)=\tr(z)+\tr(t)$ and $\tr(c z)=c \tr(z)$ for any $z,t \in A$ and $c \in F$.

Another way to understand the trace form is the following:
every noncentral element $\lambda$ in $[\alpha,\beta)_{p,F}$ generates a field extension of degree $p$ over $F$.
Therefore it satisfies some minimal polynomial equation
$$\lambda^p+c_{p-1} \lambda^{p-1}+\dots+c_1 \lambda+c_0=0.$$
The trace $\tr(\lambda)$ of $\lambda$ is $-c_{p-1}$ and the norm $\norm(\lambda)$ of $\lambda$ is $-c_0$.
Specifically, for any $\lambda$ in $F[x]$, $\tr(\lambda)=\lambda+\sigma(\lambda)+\dots+\sigma^{p-1}(\lambda)$ and $\norm(\lambda)=\lambda \sigma(\lambda) \dots \sigma^{p-1}(\lambda)$ where $\sigma$ is the automorphism of $F[x]$ fixing $F$ and mapping $x$ to $x+1$.
Note that $\sigma(x)=y x y^{-1}$ and $\norm(x)=\alpha$.

Every element $z$ in $A$ can be written as $\sum_{i=0}^{p-1} \sum_{j=0}^{p-1} a_{i,j} x^i y^j$ for some $a_{i,j} \in F$.
In order to compute the trace of $z$, it is therefore enough to know the trace of each $x^i y^j$.
If $j \neq 0$ then $(x^i y^j)^p=x^i \sigma^j (x^i) \dots \sigma^{(p-1) j}(x^i) (y^j)^p =\norm(x^i) (y^p)^j=\alpha^i \beta^j$ and so $\tr(x^i y^j)=0$.

Now, for any $i \in \{0,1,\dots,p-2\}$, we have
\[
\tr(x^i)=x^i+\sigma(x^i)+\dots+\sigma^{p-1}(x^i)
=\sum_{k=0}^{p-1} (x+k)^i=\sum_{k=0}^{p-1} \sum_{\ell=0}^i \binom{i}{\ell} k^{\ell} x^{i-\ell}.
\]

\begin{rem}\label{shek}
For each $\ell$ in $\{0,\dots,i\}$ we have $\sum_{k=0}^{p-1} k^{\ell}=0$, and so $\tr(x^i)=0$.
\end{rem}

This fact is well-known and follows directly from Newton's identities and the characteristic polynomial of $x$. We present here an alternative proof:

\begin{proof}
Note that $$\sum_{k=0}^{p-1} \sum_{\ell=0}^i \binom{i}{\ell} k^{\ell} x^{i-\ell}
=\sum_{\ell=0}^i \left(\sum_{k=0}^{p-1} k^{\ell}\right) \binom{i}{\ell} x^{i-\ell}.$$
For $\ell=0$ we have $$\sum_{k=0}^{p-1} k^{\ell}=\underbrace{1+\dots+1}_{p \  \text{times}}=0.$$
Suppose $\ell \neq 0$.
Note that the multiplicative group $(\mathbb{Z}/p \mathbb{Z})^\times$ is cyclic of order $p-1$. Let $g$ be its generator. Then 
$$\sum_{k=0}^{p-1} k^{\ell}=\sum_{k=1}^{p-1} k^{\ell}=\sum_{r=0}^{p-2} (g^r)^{\ell}=\sum_{r=0}^{p-2} (g^\ell)^r=\frac{(g^{\ell})^{p-1}-1}{g^{\ell}-1}.$$ Since $1 \leq \ell \leq i \leq p-2$, $g^{\ell} \neq 1$ whereas $(g^{\ell})^{p-1}=1$. 
Hence $$\frac{(g^{\ell})^{p-1}-1}{g^{\ell}-1}=\frac{0}{g^{\ell}-1}=0.\qedhere$$ 
\end{proof}

From the equality $x^p-x=\alpha$ we get $(x^{-1})^p+\frac{1}{\alpha} (x^{-1})^{p-1}-\frac{1}{\alpha}$, which means $\tr(x^{-1})=-\frac{1}{\alpha}$. Similarly, $x^{p-1}=1+\alpha x^{-1}$, and so $\tr(x^{p-1})=\tr(1)+\alpha \tr(x^{-1})=-1$.
We can also derive this fact as a corollary of \Rref{shek} in the following way:
\[
\tr(x^{p-1})=\sum_{k=0}^{p-1} \sum_{\ell=0}^{p-1} \binom{p-1}{\ell} k^{\ell} x^{p-1-\ell}=\sum_{k=0}^{p-1}k^{p-1},
\]
and by Fermat's little theorem,
\[
\sum_{k=0}^{p-1} k^{p-1}=0+\underbrace{1+\dots+1}_{p-1 \  \text{times}}=p-1=-1.
\]

We outline these computations in the following remark:

\begin{rem} \label{remushim}
	The trace form $\tr : A \rightarrow F$ maps every element $ \sum_{i=0}^{p-1} \sum_{j=0}^{p-1} a_{i,j} x^i y^j$ to $-a_{p-1,0}$.
\end{rem}

\subsection{Trace condition for being $p$-central}

Let $v_1,...,v_m$ be elements of $A$ and $d_1,...,d_m$ be non-negative integers. The notation $v_1^{d_1} * \dots * v_{m}^{d_{m}}$ stands for the sum of all the possible products of $d_1$ copies of $v_1$, $d_2$ copies of $v_2$ and so on (see \cite[\S1.2]{Revoy}). For example, $v_1^2 * v_2=v_1^2 v_2+v_1 v_2 v_1+v_2 v_1^2$.

Consider the $F$-vector subspace $V=F v_1+\dots+F v_m$ of $A$.
A necessary and sufficient condition for $V$ to be $p$-central is $\tr(v_1^{d_1} * \dots * v_m^{d_m})=0$ for every choice of non-negative integers $d_1,\dots,d_m$ satisfying $d_1+\dots+d_m \leq p-1$ (see \cite[Theorem 36]{MRSV}).
Note that although in this condition we are using a specific basis of $V$, the property of being $p$-central is independent of the choice of basis.

\begin{rem}\label{extension}
Let $L$ be some field extension of $F$ and $B=A \otimes L$.
Let $W=L v_1+\dots+L v_m$ the scalar extension of $V$ from $F$ to $L$. Then by the necessary and sufficient condition for being $p$-central mentioned above, if $V$ is $p$-central in $A$ then $W$ is $p$-central in $B$.
\end{rem}

\section{Maximal $p$-Central Subspaces in the Generic Algebra}

\begin{thm} \label{mainthm}
Let $p$ be a prime number, $K$ be a field with $\operatorname{char}(K)=p$ and $F$ be either the function field $K(\alpha,\beta)$ in two algebraically independent variables over $K$ or the field of iterated Laurent series $K((\alpha^{-1}))((\beta^{-1}))$. Then the maximal dimension of a $p$-central subspace of $[\alpha,\beta)_{p,F}$ is $p+1$.
\end{thm}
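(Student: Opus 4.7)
The plan is to reduce the theorem to the case $F=K((\alpha^{-1}))((\beta^{-1}))$ and then combine a valuation argument on the division algebra $A=[\alpha,\beta)_{p,F}$ with the number-theoretic lemma stated in the abstract. The lower bound $\dim V=p+1$ is already realized in the introduction by the subspace $F[x]y+F$, so the content of the theorem is the upper bound $\dim V\le p+1$. By \Rref{extension}, a $(p+2)$-dimensional $p$-central subspace of $[\alpha,\beta)_{p,K(\alpha,\beta)}$ would extend, after scalar extension, to a $(p+2)$-dimensional $p$-central subspace of $[\alpha,\beta)_{p,F}$, so it suffices to rule out such a subspace in the Laurent series case. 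Equip $F$ with its canonical rank-two lexicographic valuation $v_F$, under which $v_F(\alpha),v_F(\beta)$ form a $\mathbb{Z}$-basis of $\mathbb{Z}\oplus\mathbb{Z}$. Because $A$ is a division algebra (a direct check exploiting that $\alpha,\beta$ behave as independent uniformizers), $v_F$ extends uniquely to a valuation $v$ on $A$ satisfying $pv(x)=v(\alpha)$ and $pv(y)=v(\beta)$, with value group $\tfrac1p\mathbb{Z}\oplus\tfrac1p\mathbb{Z}$. Then $[v(A^\times):v_F(F^\times)]=p^2=[A:F]$, so the residue is trivial; the basis $\{x^iy^j\}_{0\le i,j\le p-1}$ is $v$-orthogonal; and the cosets of its elements' values exhaust the quotient $G:=v(A^\times)/v_F(F^\times)\cong(\mathbb{Z}/p\mathbb{Z})^2$.

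Now suppose $V\subseteq A$ is $p$-central with $\dim V=p+2$. By Gaussian elimination on the coordinates of a basis of $V$ in $\{x^iy^j\}$, we may assume the chosen basis $v_0,v_1,\ldots,v_{p+1}$ of $V$ has pairwise distinct leading monomials; in particular it is $v$-orthogonal and the classes $s_i:=v(v_i)+v_F(F^\times)\in G$ are $p+2$ distinct elements. Since at most one $s_i$ can be zero, after relabeling we take $s_1,\ldots,s_{p+1}$ to be $p+1$ distinct nonzero elements of $G$. Let $g_0\in G$ denote the class of $v(x^{p-1})$; it is nonzero. The number-theoretic statement of the abstract then supplies non-negative integers $d_1,\ldots,d_{p+1}$ with $D:=d_1+\cdots+d_{p+1}\le p-1$ and $d_1s_1+\cdots+d_{p+1}s_{p+1}=g_0$ in $G$.

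Consider $w:=v_1^{d_1}*\cdots*v_{p+1}^{d_{p+1}}\in A$. The trace criterion for $p$-centrality recalled just before \Rref{extension} forces $\tr(w)=0$; to reach a contradiction, I would compute $\tr(w)$ directly and show it is nonzero. Writing each $v_i=c_ix^{k_i}y^{l_i}+r_i$ with $c_i\in F^\times$, $0\le k_i,l_i\le p-1$, and $v(r_i)>v(c_ix^{k_i}y^{l_i})$, the commutation relation $yx=(x+1)y$ and the reductions $x^p=x+\alpha$, $y^p=\beta$ all preserve the valuation. Consequently every one of the $\binom{D}{d_1,\ldots,d_{p+1}}$ orderings in the starred product yields the same leading (minimal-valuation) basis monomial. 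The constraint $\sum d_is_i=g_0$ forces this monomial to be $x^{p-1}$, and the total leading contribution to $w$ is
\[
\binom{D}{d_1,\ldots,d_{p+1}}\prod_{i=1}^{p+1}c_i^{d_i}\cdot\alpha^{Q}\beta^{Q'}x^{p-1}
\]
for explicit non-negative integers $Q,Q'$. Since $D<p$, the multinomial is nonzero in characteristic $p$, and each $c_i\ne 0$; hence the $x^{p-1}$-coefficient of $w$ is nonzero, and \Rref{remushim} gives $\tr(w)\ne 0$, the desired contradiction.

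The principal technical obstacle is the leading-term bookkeeping in the final step: one must carefully verify that all orderings in the starred product contribute the same leading monomial in the basis $\{x^iy^j\}$, not merely the same valuation, and that the successive reductions $x^p=x+\alpha$, $y^p=\beta$ do not induce cancellations at the minimal valuation level. Once this is confirmed, the number-theoretic lemma from the abstract serves as a black box, and the contradiction with the $p$-central trace condition closes the argument.
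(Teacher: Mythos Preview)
Your proposal is correct and follows essentially the same route as the paper: reduce to the Laurent series field via \Rref{extension}, use the totally ramified Henselian valuation on $A$ to pick a basis of $V$ whose values are distinct in $\Gamma_A/\Gamma_F\cong(\mathbb{Z}/p\mathbb{Z})^2$, invoke the number-theoretic lemma with target $g_0=(p-1,0)$, and then show the resulting starred product has nonzero $x^{p-1}$-coefficient (hence nonzero trace by \Rref{remushim}), contradicting the trace criterion for $p$-centrality. The only cosmetic differences are that the paper cites \cite[Proposition~3.14]{TignolWadsworth:2015} for the existence of a basis with distinct value classes where you invoke a Gaussian-elimination-style reduction, and the paper packages your leading-term bookkeeping into a separate proposition (\Pref{propprop}) with an explicit ``tilde'' operator $z\mapsto\widetilde{z}$ tracking the minimal-valuation monomial; the technical obstacle you flag is exactly what that proposition verifies.
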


\sloppy
The rest of the paper is dedicated to proving this theorem.
By Remark \ref{extension}, every $p$-central subspace of $[\alpha,\beta)_{p,K(\alpha,\beta)}$ gives rise to a $p$-central subspace of $[\alpha,\beta)_{p,K((\alpha^{-1}))((\beta^{-1}))}$ of the same dimension. Therefore it is enough to prove the theorem for $K((\alpha^{-1}))((\beta^{-1}))$.
Moreover, in \S1 we gave an example of a $p$-central subspace of dimension $p+1$. Hence, it is enough to show that every $(p+2)$-dimensional subspace of $A$ is not $p$-central.

Let $F=K((\alpha^{-1}))((\beta^{-1}))$,
$A=F \langle x,y : x^p-x=\alpha, y^p=\beta, y x y^{-1}=x+1 \rangle=[\alpha,\beta)_{p,F}$,
and $\frak{v}$ be the right-to-left $(\alpha^{-1},\beta^{-1})$-adic Henselian valuation on $F$. Recall that the value group $\Gamma_F$ of $F$ is $\mathbb{Z} \times \mathbb{Z}$. For general introduction to valuation theory on division algebras see \cite{TignolWadsworth:2015}.

\begin{rem}
The algebra $A$ is a division algebra.
\end{rem}

\begin{proof}
We use the necessary and sufficient condition for a symbol algebra to be a division algebra mendtioned in \S1.  
Consider the equation $\lambda^p-\lambda=\alpha$ over $F$.
Suppose it has a root $z$.
Then $z+k$ is also a root for any $k \in \mathbb{Z}/p \mathbb{Z}$.
If $\frak{v}(z) \geq (0,0)$ then $z^p-z=z (z+1) \cdot ... \cdot (z+p-1)$ must have a nonnegative value.
However, $\frak{v}(\alpha)=(-1,0)$, which means that $\frak{v}(z)<(0,0)$.
Therefore $\frak{v}(z+k)=\frak{v}(z)$ for any $k \in \mathbb{Z}/p \mathbb{Z}$, and so $\frak{v}(z)=\frac{1}{p} \frak{v}(\alpha)=(-\frac{1}{p},0)$ which is not in $\Gamma_F$, contradiction.
Hence $F[x: x^p-x=\alpha]$ is a field.
Its value group is $\frac{1}{p} \mathbb{Z} \times \mathbb{Z}$.
Every norm in the field extension $F[x : x^p-x=\alpha]/F$ has a value in $\mathbb{Z} \times p \mathbb{Z}$. 
Since $v(\beta)=(0,-1)$, $\beta$ cannot be a norm in this field extension.
\end{proof}

Let $V=F v_1+\dots+F v_{p+2}$ be a $(p+2)$-dimensional subspace of $A$. We are going to prove that $V$ is not $p$-central.
Since $A$ is a division algebra and $\frak{v}$ is Henselian, the valuation $\frak{v}$ extends uniquely to $A$ (\cite[Theorem 1.4]{TignolWadsworth:2015}).
Note that $\frak{v}(x)=(-\frac{1}{p},0)$ and $\frak{v}(y)=(0,-\frac{1}{p})$. Thus $\Gamma_A=\frac{1}{p} \mathbb{Z} \times \frac{1}{p} \mathbb{Z}$ and $\Gamma_A/\Gamma_F\cong \mathbb{Z}/p\mathbb{Z}\times \mathbb{Z}/p\mathbb{Z}$. Since $\dim A=p^2=|\Gamma_A/\Gamma_F|$, $A$ is totally ramified.
Let $\varphi\colon \Gamma_A\rightarrow \Gamma_A/\Gamma_F$ be the quotient map.
By \cite[Proposition 3.14]{TignolWadsworth:2015}, we have $|\varphi(\Gamma_V)|=[V:F]$. 
Thus we can choose an $F$-basis $v_1,\dots,v_{p+2}$ for $V$ whose values are distinct elements in $\{0,-\frac{1}{p},\dots,-\frac{p-1}{p}\} \times \{0,-\frac{1}{p},\dots,-\frac{p-1}{p}\}$ (see also \cite[Remark 2.2]{ChapmanUre}). For every $k \in \{1,\dots,p+2\}$ let $(i_k,j_k)$ be $-p\frak{v}(v_k)$.

\begin{prop} \label{propprop}
Suppose there are non-negative integers $d_1,\dots,d_{p+2}$ with $d_1+\dots+d_{p+2} \leq p-1$ such that $d_1 i_1\dots+d_{p+2} i_{p+2} \equiv p-1 \pmod{p}$ and $d_1 j_1+\dots+d_{p+2} j_{p+2} \equiv 0 \pmod{p}$.
Then $\tr(v_1^{d_1} * \dots * v_{p+2}^{d_{p+2}}) \neq 0$ and so $V$ is not $p$-central.
\end{prop}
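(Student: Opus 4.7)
The plan is to exhibit a nonzero leading term in the trace by isolating the valuation-dominant contribution of each $v_k$ and tracking it through the symmetric product.

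Since $A$ is totally ramified, its residue algebra is $K$, so for each $k$ the element $v_k(x^{i_k}y^{j_k})^{-1}$ has valuation $0$ and reduces to some $c_k\in K^{\times}$; lifting $c_k$ along $K\subseteq F\subseteq A$ yields $v_k = c_k x^{i_k}y^{j_k}+w_k$ with $c_k$ central and $\frak{v}(w_k)>\frak{v}(v_k)$. Using $yx=(x+1)y$, any monomial product $x^{a_1}y^{b_1}\cdots x^{a_n}y^{b_n}$ equals $x^{a_1+\cdots+a_n}y^{b_1+\cdots+b_n}$ plus terms with the same $y$-exponent but strictly smaller $x$-exponent, which have strictly higher valuation under the $\beta$-dominant lex order. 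Summing the $\binom{d}{d_1,\dots,d_{p+2}}$ arrangements of the symmetric product (with $d=d_1+\cdots+d_{p+2}$) yields
\[
v_1^{d_1}*\cdots*v_{p+2}^{d_{p+2}} = M\,x^I y^J + R,
\]
where $I=\sum_k d_k i_k$, $J=\sum_k d_k j_k$, $M=\binom{d}{d_1,\dots,d_{p+2}}\prod_k c_k^{d_k}\in K^{\times}$ (a unit because $d\leq p-1$ keeps the multinomial coefficient coprime to $p$), and $\frak{v}(R)>(-I/p,-J/p)$.

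Next I reduce $x^I y^J$ in the basis $\{x^iy^j:0\leq i,j\leq p-1\}$. The hypotheses give $I = qp+(p-1)$ and $J = rp$ for non-negative integers $q,r$, and the bound $\sum d_k\leq p-1$ together with $i_k\leq p-1$ forces $q\leq p-2$. Using $y^p=\beta\in F$ (central) and $x^p=x+\alpha$,
\[
x^I y^J = \beta^r\,x^{p-1}(x+\alpha)^q = \beta^r\sum_{\ell=0}^{q}\binom{q}{\ell}\alpha^{q-\ell}x^{p-1+\ell}.
\]
The $\ell=0$ summand contributes $\alpha^q\beta^r x^{p-1}$; for $1\leq \ell\leq q\leq p-2$ the relation $x^p=x+\alpha$ rewrites $x^{p-1+\ell}$ as $x^\ell+\alpha x^{\ell-1}$, of $x$-degree at most $p-2$, contributing $0$ to the coefficient of $x^{p-1}$. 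Hence the $x^{p-1}$-coefficient of $Mx^I y^J$ is $M\alpha^q\beta^r$, with valuation $(-q,-r)\in\Gamma_F$.

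Finally, because $A$ is totally ramified the basis monomials $x^iy^j$ lie in distinct cosets of $\Gamma_F$ in $\Gamma_A$, so for $R=\sum R_{i,j}x^iy^j$ each summand $R_{i,j}x^iy^j$ has valuation at least $\frak{v}(R)$; in particular $\frak{v}(R_{p-1,0})>(-q,-r)$. Combining with Remark \ref{remushim},
\[
\tr\bigl(v_1^{d_1}*\cdots*v_{p+2}^{d_{p+2}}\bigr) = -\bigl(M\alpha^q\beta^r+R_{p-1,0}\bigr)
\]
has valuation exactly $(-q,-r)$ and is therefore nonzero, whence $V$ is not $p$-central by the criterion from \cite[Theorem 36]{MRSV}. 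The main obstacle is cleanly extracting the $x^{p-1}$-coefficient from the many non-commutative rearrangement corrections and the remainders $w_k$; this is handled uniformly by the distinct-coset observation, which forces every correction to live in strictly higher valuation within the trace coset.
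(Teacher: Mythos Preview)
Your proof is correct and follows essentially the same strategy as the paper: isolate the valuation-minimal monomial of each $v_k$, track it through the symmetric product, observe that the multinomial coefficient is a unit in $K$, and conclude via Remark~\ref{remushim} that the trace has a nonzero leading term $-M\alpha^q\beta^r$. The only cosmetic difference is that the paper encodes the bookkeeping via a ``leading monomial'' map $z\mapsto\widetilde{z}$ that already lives in the $F$-basis $\{x^iy^j:0\le i,j\le p-1\}$ and is shown to be multiplicative, whereas you first form the unreduced monomial $x^Iy^J$ and then reduce using $x^p=x+\alpha$, $y^p=\beta$ together with the bound $q\le p-2$; both routes land on the same $x^{p-1}$-coefficient.
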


\begin{proof}
\sloppy Recall that each element $z \in A$ can be written uniquely as $z=\sum_{i=0}^{p-1} \sum_{j=0}^{p-1} a_{i,j} x^i y^j$ where $a_{i,j} \in F$ for any $i,j \in \{0,\dots,p-1\}$.
All the nonzero terms in this sum have distinct values, because they are distinct modulo $\Gamma_F=\mathbb{Z} \times \mathbb{Z}$.
There is therefore one term $a_{i_0,j_0} x^{i_0} y^{j_0}$ of minimal value which determines the value of $z$.
The coefficient $a_{i_0,j_0}$ is a Laurent series in $K((\alpha^{-1}))((\beta^{-1}))$, so it also has a term of minimal value $c \alpha^{r_0} \beta^{s_0}$ for some $r_0,s_0 \in \mathbb{Z}$ and nonzero $c \in K$.
Let $\widetilde{z}$ denote $c \alpha^{r_0} \beta^{s_0} x^{i_0} y^{j_0}$. Note that $\frak{v}(z)=-\frac{1}{p}(pr_o+i_0,ps_0+j_0)$, so the value of $z$ determines $\widetilde{z}$ up to a nonzero scalar from $K$. Since we can multiply the basis elements by scalars from $F$, we may assume $\widetilde{v_k}=x^{i_k} y^{j_k}$ for each $k \in \{1,\dots,p+2\}$.

Since $yx=xy+y$ and $x^p=\alpha+x$ where $\frak{v}(y)>\frak{v}(xy)$ and $\frak{v}(x)>\frak{v}(\alpha)$, for any $r_0,s_0,r_1,s_2 \in \mathbb{Z}$ and $i_0,j_0,i_1,j_1 \in \{0,\dots,p-1\}$ we have $(\alpha^{r_0} \beta^{s_0} x^{i_0} y^{j_0})(\alpha^{r_1} \beta^{s_1} x^{i_1} y^{j_1})=\alpha^{r_2} \beta^{s_2} x^{i_2} y^{j_2}+S$ where $i_2$ and $j_2$ are the unique integers in $\{0,\dots,p-1\}$ with $i_2 \equiv  i_0+i_1 \pmod{p}$ and $j_2 \equiv j_0+j_1 \pmod{p}$, $r_2=r_0+r_1+\frac{i_0+i_1-i_2}{p}$, $s_2=s_0+s_1+\frac{j_0+j_1-j_2}{p}$, and $\frak{v}(S)>\frak{v}(\alpha^{r_2} \beta^{s_2} x^{i_2} y^{j_2})=-\frac{1}{p}(pr_0+pr_1+i_0+i_1,ps_0+ps_1+j_0+j_1)$.
Consequently, if $\widetilde{z_0}=\alpha^{r_0} \beta^{s_0} x^{i_0} y^{j_0}$ and $\widetilde{z_1}=\alpha^{r_1} \beta^{s_1} x^{i_1} y^{j_1}$ then $\widetilde{z_0z_1}=\alpha^{r_2} \beta^{s_2} x^{i_2} y^{j_2}$.

Recall that $\Sigma=v_1^{d_1} * \dots * v_{p+2}^{d_{p+2}}$ is the sum of products of $d_1$ copies of $v_1$, $d_2$ copies of $v_2$ etc.
For each summand $\pi$ in $\Sigma$,
$$\frak{v}(\pi)=-\frac{1}{p}\left(d_1 (i_1,j_1)+\dots+d_{p+2} (i_{p+2},j_{p+2})\right).$$

Since $d_1 i_1+\dots+d_{p+2} i_{p+2} \equiv p-1 \pmod{p}$ and $d_1 j_1+\dots+d_{p+2} j_{p+2} \equiv 0 \pmod{p}$, we have $\widetilde{\pi}=\alpha^r \beta^s x^{p-1}$ where
\[
r=\frac{d_1 i_1+\dots+d_{p+2} i_{p+2}-p+1}{p} \enspace \operatorname{and} \enspace s=\frac{d_1 j_1+\dots+d_{p+2} j_{p+2}}{p}.
\]
Notice that $n=\binom{d_1+\dots+d_{p+2}}{d_1,\dots,d_{p+2}}$ is the number of terms in $\Sigma$. 
Since $d_1+\dots+d_{p+2} \leq p-1$ and $p$ is prime, $n$ is not a multiple of $p$. 
Therefore $\widetilde{\Sigma}$ is $n \alpha^r \beta^s x^{p-1}$, and by \Rref{remushim} the trace of $\Sigma$ is a Laurent series whose leading term is $-n \alpha^{r} \beta^s$, and thus it is nonzero.
\end{proof}

In the following section we prove that the conditions of \Pref{propprop} are satisfied.

\section{The Number Theoretic Problem}

\begin{thm}\label{numbertheorem}
Let $p$ be a prime integer, $G$ be the group $\mathbb{Z}/p \mathbb{Z} \times \mathbb{Z}/p \mathbb{Z}$ and $S=\{s_1,\dots,s_{p+1}\}$ be $p+1$ distinct nonzero elements of $G$. 
Then for any nonzero $g$ in $G$, there exist non-negative integers $d_1,\dots,d_{p+1}$ with $\sum_{i=1}^{p+1} d_i \leq p-1$ such that $d_1 s_1+\dots+d_{p+1} s_{p+1}=g$.
\end{thm}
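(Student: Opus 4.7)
My plan is to prove the result by a single application of Kneser's addition theorem, applied after augmenting $S$ with the element $0$.

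Set $S_0 = S \cup \{0\}$, so that $|S_0| = p + 2$, and let $C$ denote the $(p-1)$-fold Minkowski sumset $S_0 + S_0 + \cdots + S_0$. First I would observe that if $C = G$, the theorem follows: any $g \in G$ can be written as $g = u_1 + u_2 + \cdots + u_{p-1}$ with $u_j \in S_0$, and setting $d_i = |\{j : u_j = s_i\}|$ produces non-negative integers with $\sum_i d_i \le p - 1$ (the indices $j$ with $u_j = 0$ simply contribute slack) and $\sum_i d_i s_i = g$. Hence it suffices to prove $C = G$.

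Let $H$ denote the stabilizer of $C$ inside $G$. Since $G = (\mathbb{Z}/p\mathbb{Z})^2$ has only three types of subgroup, namely the trivial group $\{0\}$, one of the $p+1$ lines through the origin (each of size $p$), and $G$ itself, I would case-split on $H$ and apply iterated Kneser,
\[
|C| \;\ge\; (p-1)\,|S_0 + H|\,-\,(p-2)\,|H|,
\]
to each case. When $H = \{0\}$, the right-hand side equals $(p-1)(p+2) - (p-2) = p^2$. When $H = \ell$ is a line of size $p$, the inequality $|S_0| = p + 2 > p$ prevents $S_0$ from lying inside a single $\ell$-coset, so $S_0$ meets at least two cosets and $|S_0 + \ell| \ge 2p$; the bound then becomes $(p-1)(2p) - (p-2)p = p^2$. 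When $H = G$, the set $C$ is a union of cosets of $G$, hence $C \in \{\emptyset, G\}$, and since $0 = 0 + \cdots + 0 \in C$ we conclude $C = G$. Thus in all three cases $|C| \ge p^2 = |G|$, forcing $C = G$.

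The main conceptual content is recognizing that adjoining $0$ to $S$ converts the inequality constraint $\sum d_i \le p - 1$ into an equality in a Kneser sumset, and that the size $|S_0| = p + 2$ is exactly what the Kneser lower bound needs in order to saturate at $p^2$ in the trivial-stabilizer case (while the bound $|S_0| > |\ell|$ handles the line-stabilizer case). I do not foresee any serious obstacle beyond this setup; the case analysis over the subgroup lattice of $\mathbb{F}_p^2$ is routine.
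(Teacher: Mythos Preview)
Your proof is correct and takes a genuinely different route from the paper's. The paper proves an auxiliary proposition (via a pigeonhole argument on permutations of $\{0,\dots,p-1\}$) that, given three elements $s_1,s_2,s_3$ independent in pairs with $s_3=as_1+bs_2$ and $a+b\not\equiv 1$, the element $g$ can be reached with total coefficient $\leq p-1$; it then performs a case analysis on how $S$ meets the $p+1$ proper nonzero subgroups of $G$ to locate such a triple (or an analogous quadruple). Your argument bypasses all of this: adjoining $0$ to $S$ turns the weighted-sum constraint into an exact $(p-1)$-fold sumset, and the iterated Kneser bound $|C|\ge (p-1)|S_0+H|-(p-2)|H|$ together with the trivial subgroup lattice of $(\mathbb{Z}/p\mathbb{Z})^2$ forces $|C|\ge p^2$ in each of the three cases. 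The trade-off is that the paper's proof is self-contained and elementary, while yours is shorter and more conceptual but imports Kneser's theorem as a black box; on the other hand, your approach immediately suggests how the statement would generalize (e.g.\ to longer sequences $S$ or to $(\mathbb{Z}/p\mathbb{Z})^n$), which the paper's case analysis does not.
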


The set $\{(i_1,j_1),...,(i_{p+2},j_{p+2})\}$ from \Pref{propprop} consists of $p+2$ distinct elements in $G$. Thus there are at least $p+1$ nonzero elements in this set.
If we take $g$ to be $(p-1,0)$ and $S$ to be $p+1$  nonzero elements from $\{(i_1,j_1),...,(i_{p+2},j_{p+2})\}$, then the conditions of \Pref{propprop} are satisfied. 
Thus by proving \Tref{numbertheorem}, we complete the proof of \Tref{mainthm}.

\begin{prop}\label{usefulprop}
Suppose $p$ is an odd prime and $n$ a positive integer.
Let $a_1,\dots,a_n$ be integers prime to $p$ with $a_1+\dots+a_n \not \equiv 1 \pmod{p}$.
Then for any integers $b_1,\dots,b_n$ there exist non-negative integers $d_1,\dots,d_{n+1}$ with $d_1+\dots+d_{n+1} \leq \frac{n}{2}(p-1)$ such that $d_k+d_{n+1} a_k \equiv b_k \pmod{p}$ for every $k \in \{1,\dots,n\}$.
\end{prop}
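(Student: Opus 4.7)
The plan is to set $d_{n+1} = t$ for some $t \in \{0, 1, \dots, p-1\}$ and let $d_k$ (for $1 \le k \le n$) be the least non-negative residue of $b_k - t a_k$ modulo $p$. Writing $\langle x\rangle$ for that residue, these choices automatically satisfy the congruences $d_k + d_{n+1} a_k \equiv b_k \pmod p$, and their total is
\[
f(t) := t + \sum_{k=1}^n \langle b_k - t a_k\rangle.
\]
So the task reduces to producing $t \in \{0, \dots, p-1\}$ with $f(t) \le \frac{n(p-1)}{2}$.

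First I would reduce $f(t)$ modulo $p$. Since $\langle b_k - t a_k\rangle \equiv b_k - t a_k \pmod p$, this gives
\[
f(t) \equiv \sum_{k=1}^n b_k + t\Bigl(1 - \sum_{k=1}^n a_k\Bigr) \pmod p.
\]
Here the hypothesis $\sum_k a_k \not\equiv 1 \pmod p$ plays its role: it forces this linear function of $t$ to have non-zero leading coefficient, so $t \mapsto f(t) \bmod p$ is a bijection of $\mathbb{Z}/p\mathbb{Z}$. In particular the values $f(0), f(1), \dots, f(p-1)$ realize every residue class modulo $p$ exactly once.

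Next I would compute $\sum_{t=0}^{p-1} f(t)$. Each $a_k$ being prime to $p$, the map $t \mapsto \langle b_k - t a_k\rangle$ permutes $\{0, \dots, p-1\}$, so its sum over $t$ equals $p(p-1)/2$; together with $\sum_t t = p(p-1)/2$ this yields $\sum_{t=0}^{p-1} f(t) = (n+1)\,p(p-1)/2$. Writing $M = \min_t f(t)$, the congruence observation tells us that for each residue $r \in \{0, \dots, p-1\}$ the unique $t$ with $f(t) \equiv r \pmod p$ satisfies $f(t) \ge M + \bigl((r - M) \bmod p\bigr)$. Since $r \mapsto (r - M) \bmod p$ is itself a permutation of $\{0, \dots, p-1\}$, summing over $r$ yields
\[
(n+1)\frac{p(p-1)}{2} \;=\; \sum_{t=0}^{p-1} f(t) \;\ge\; pM + \frac{p(p-1)}{2},
\]
and hence $M \le n(p-1)/2$, as required.

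There is no serious obstacle; the key subtlety is to recognize exactly where the hypothesis $\sum_k a_k \not\equiv 1 \pmod p$ is spent. A naive averaging of $f$ over $t \in \{0,\dots,p-1\}$ only delivers $M \le (n+1)(p-1)/2$, overshooting the target by $(p-1)/2$. The hypothesis supplies precisely that savings by forcing $f(0), \dots, f(p-1)$ to be pairwise distinct modulo $p$, and this distinctness is what shaves the extra $(p-1)/2$ off the averaging bound.
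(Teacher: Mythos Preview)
Your proof is correct and follows essentially the same approach as the paper: your $f(t)$ is exactly the paper's $\varphi(t)=t+\sigma_1(t)+\dots+\sigma_n(t)$ with $\sigma_k(t)=\langle b_k-ta_k\rangle$, and both arguments use that the hypothesis makes the values $f(0),\dots,f(p-1)$ pairwise distinct modulo $p$ to sharpen the averaging bound on the minimum. The only cosmetic difference is that the paper derives the contradiction by assuming all $\varphi(t)\ge \frac{n(p-1)}{2}+1$ and lower-bounding the sum of $p$ distinct integers, whereas you bound each $f(t)$ below by $M+((r-M)\bmod p)$; these are equivalent formulations of the same counting.
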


\begin{rem}\label{permutation}
For any integers $a$ and $b$ with $\gcd(a,p)=1$, the function
$\sigma : \{0,1,\dots,p-1\} \rightarrow \{0,1,\dots,p-1\}$ mapping each $t$ to the representative of the $\pmod{p}$-congruence class of $b-at$ is  injective, and so $\sigma$ is a permutation.
\end{rem}

\begin{proof}[Proof of \Pref{usefulprop}]
Since $a_1,\dots,a_n$ are prime to $p$, Remark \ref{permutation} implies that there are permutations
$\sigma_1,\dots,\sigma_n : \{0,\dots,p-1\} \rightarrow \{0,\dots,p-1\}$ satisfying $\sigma_k(t)+t a_k \equiv b_k \pmod{p}$ for any $k \in \{1,\dots,n\}$ and $t \in \{0,\dots,p-1\}$.
Let $\varphi : \{0,\dots,p-1\} \rightarrow \mathbb{Z}$ be the function defined by $\varphi(t)=t+\sigma_1(t)+\dots+\sigma_n(t)$.
Since 
\[
\varphi(t) \equiv \sum_{k=1}^n b_k+t \(1-\sum_{k=1}^n a_k\) \pmod{p}
\]
and
\[ 
 1-\sum_{k=1}^n a_k \not \equiv 0 \pmod{p},
\]
the integers $\varphi(0),\dots,\varphi(p-1)$ belong to different $\pmod{p}$-congruence classes, and so they are different in pairs as integers.

Now 
\[
\begin{split}
\sum_{t=0}^{p-1}\varphi(t)&=\sum_{t=0}^{p-1} \(t+\sigma_1(t)+\dots+\sigma_n(t)\)\\
&=\sum_{t=0}^{p-1} t+\sum_{t=0}^{p-1}\sigma_1(t)+\dots+\sum_{t=0}^{p-1}\sigma_n(t)\\
&=\frac{(n+1) p (p-1)}{2}.
\end{split}
\]
If $\varphi(t) \geq \frac{n (p-1)}{2}+1$ for each $t \in \{0,\dots,p-1\}$,
then since $\varphi(0),...,\varphi(p-1)$ are distinct integers we have
\[
\sum_{t=0}^{p-1}\varphi(t) \geq \sum_{i=1}^p\(\frac{n (p-1)}{2}+i\)=\frac{np(p-1)}{2} + \frac{p(p+1)}{2}>\frac{(n+1) p (p-1)}{2},
\] 
contradiction.

Consequently there exists some $t \in \{0,\dots,p-1\}$ for which $\varphi(t) \leq \frac{n (p-1)}{2}$.
Take then $d_{n+1}=t$ and $d_k=\sigma_k(t)$ for any $k \in \{1,\dots,n\}$.
\end{proof}

\begin{cor} \label{usefulcor}
	Let $p$ be an odd prime and $G=\mathbb{Z}/p\mathbb{Z}\times \mathbb{Z}/p\mathbb{Z}$.
	\begin{enumerate}
	\item[(1)] Let $s_1,s_2,s_3$ in $G$ be linearly independent in pairs where $s_3=as_1+bs_2$ and $a+b\not \equiv 1\pmod{p}$. Then for every nonzero element $g$ of $G$ there exist non-negative integers $d_1,d_2,d_3$ where $d_1+d_2+d_3\leq p-1$ such that $g=d_1s_1+d_2s_2+d_3s_3$.
	\item[(2)] Let $s_1,s_2,s_3,s_4$ be different nonzero elements of $G$ where $s_2\in\langle s_1 \rangle$, $s_4\in\langle s_3\rangle$ and $\langle s_1 \rangle \cap \langle s_3 \rangle = \{(0,0)\}$. Then for every nonzero element $g$ of $G$ there exist non-negative integers $d_1,d_2,d_3,d_4$ where $d_1+d_2+d_3+d_4\leq p-1$ such that $g=d_1s_1+d_2s_2+d_3s_3+d_4s_4$.
	\end{enumerate}
\end{cor}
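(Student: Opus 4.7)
Both parts reduce to Proposition~\ref{usefulprop} by choosing a convenient $\mathbb{F}_p$-basis of $G$ and rewriting the desired decomposition $g=\sum d_i s_i$ as a system of linear congruences of the shape $d_k+d_{n+1}a_k\equiv b_k\pmod p$ handled by that proposition.

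For part (1), pairwise linear independence of $s_1,s_2,s_3$ makes $\{s_1,s_2\}$ an $\mathbb{F}_p$-basis of $G$, and also forces $a,b\not\equiv 0\pmod p$ (otherwise $s_3=as_1+bs_2$ would be a scalar multiple of $s_2$ or $s_1$). Writing $g=c_1s_1+c_2s_2$ and substituting $s_3=as_1+bs_2$, the sought decomposition $g=d_1s_1+d_2s_2+d_3s_3$ is equivalent to
\[
d_1+a\,d_3\equiv c_1\pmod p,\qquad d_2+b\,d_3\equiv c_2\pmod p.
\]
This is exactly the setup of Proposition~\ref{usefulprop} with $n=2$, $(a_1,a_2)=(a,b)$, $(b_1,b_2)=(c_1,c_2)$ and $d_{n+1}=d_3$; the hypothesis $a+b\not\equiv 1\pmod p$ built into the corollary is precisely the one the proposition requires, and the bound $\tfrac{n}{2}(p-1)=p-1$ is exactly the one to be proved.

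For part (2), the hypothesis $\langle s_1\rangle\cap\langle s_3\rangle=\{(0,0)\}$ makes $\{s_1,s_3\}$ a basis of $G$. Writing $s_2=c s_1$ and $s_4=e s_3$, the conditions $s_2,s_4\neq 0$ and $s_2\neq s_1$, $s_4\neq s_3$ force $c,e\not\equiv 0,1\pmod p$. Writing $g=c_1s_1+c_3s_3$ and decomposing along the two basis directions, the problem decouples into two independent single-variable congruences
\[
d_1+c\,d_2\equiv c_1\pmod p,\qquad d_3+e\,d_4\equiv c_3\pmod p,
\]
each of which is solved by Proposition~\ref{usefulprop} applied with $n=1$ (the hypothesis $a_1\not\equiv 0,1\pmod p$ being exactly what $c$ and $e$ satisfy). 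The resulting pairs satisfy $d_1+d_2\le\tfrac{p-1}{2}$ and $d_3+d_4\le\tfrac{p-1}{2}$, which add up to the desired bound $d_1+d_2+d_3+d_4\le p-1$.

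The only point requiring care is to check that the stated hypotheses of the corollary translate exactly to the hypotheses of Proposition~\ref{usefulprop}, namely that the relevant scalars are prime to $p$ and that their sum is not $\equiv 1\pmod p$. I do not expect a genuine obstacle: in (1) these conditions follow respectively from pairwise linear independence and the explicit assumption $a+b\not\equiv 1$; in (2) they follow from the scalars $c,e$ being neither $0$ nor $1$, the $n=1$ sum condition reducing to $c\not\equiv 1$ and $e\not\equiv 1$.
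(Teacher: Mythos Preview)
Your proof is correct and follows essentially the same approach as the paper: in both parts you choose the same $\mathbb{F}_p$-basis of $G$, translate the equation $g=\sum d_is_i$ into the linear congruences $d_k+d_{n+1}a_k\equiv b_k\pmod p$, and invoke Proposition~\ref{usefulprop} with $n=2$ for part~(1) and twice with $n=1$ for part~(2). One minor difference: in part~(2) the paper first disposes of the degenerate case where one of the two coordinates of $g$ in the basis $\{s_1,s_3\}$ vanishes before applying the proposition, whereas you apply Proposition~\ref{usefulprop} uniformly; your uniform treatment is valid since the proposition imposes no nonvanishing condition on the $b_k$, so this case split is in fact unnecessary.
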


\begin{proof}
	\item[(1)] Since $s_1,s_2$ are linearly independent, $G=\langle s_1,s_2\rangle$ and we can present $g$ as $e_1s_1+e_2s_2$.  Taking in \Pref{usefulprop} 
	\[
	n=2, a_1=a, a_2=b, b_1=e_1, b_2=e_2 
	\]
	we get non-negative integers $d_1,d_2,d_3$ where $d_1+d_3+d_3\leq p-1$ such that 
	\[
	d_1+d_3a\equiv e_1 \pmod{p};\quad d_2+d_3b\equiv e_2\pmod{p}.
	\] 
	Therefore $d_1s_1+d_2s_2+d_3s_3=g$ and $d_1+d_2+d_3\leq p-1$.
	
	\item[(2)] Since $\langle s_1\rangle\cap\langle s_3 \rangle=\{(0,0)\}$ we have $G=\langle s_1,s_3 \rangle$ and can present $g$ as $e_1s_1+e_3s_3$ for some $e_1,e_3\in\{0,...,p-1\}$.
	 Moreover $s_2=as_1$ and $s_4=bs_3$ for some $a,b\not \equiv 0,1
	 \pmod{p}$.
	 If $e_1=0$ or $e_3=0$, then we can present $g$ as $e_3s_3$ or $e_1s_1$ and clearly $e_1,e_3\leq p-1$.
	 Otherwise we use \Pref{usefulprop} twice: once with $	n=1, a_1=a, b_1=e_1$,
	and the second time with $	n=1, a_1=b, b_1=e_3$.
	Thus we get $d_1,d_2,d_3,d_4$ where $d_1+d_2\leq \frac{p-1}{2},d_3+d_4\leq \frac{p-1}{2}$ such that
	\[
	d_1+d_2a\equiv e_1\pmod{p};\quad d_3+d_4b \equiv e_2\pmod{p}. 
	\]
	Therefore $d_1s_1+d_2s_2+d_3s_3+d_4s_4=g$ and $d_1+d_2+d_3+d_4\leq p-1$.
\end{proof}

We are now ready to prove the main theorem of this section.

\begin{proof}[Proof of Theorem \ref{numbertheorem}]
If $p=2$, then since $G$ has exactly $3$ nonzero elements, $S=G\setminus \{(0,0)\}$ and $g\in S$.

Let $p$ be an odd prime.
The number of proper nonzero subgroups of $G$ is $p+1$, and each one contains $p-1$ nonzero elements.
Thus, by the pigeonhole principle, there are two cases to deal with:
\begin{enumerate}
	\item[(1)] The set $S$ intersect only two of the proper nonzero subgroups of $G$.
	\item[(2)] The set $S$ intersects at least three of the proper nonzero subgroups of $G$;
\end{enumerate} 

\textit{Case (1)} - In this case, again due to the pigeonhole principle, in each one of the two proper subgroups there are at least two elements of $S$, say $s_1,s_2\in\langle s_1 \rangle$, $s_3,s_4 \in\langle s_3\rangle$ and $\langle s_1 \rangle \cap \langle s_3 \rangle = \{(0,0)\}$. Thus by \Cref{usefulcor}(2) we are done. 

\textit{Case (2)} - This case splits into two subcases:
\begin{enumerate}
	\item[(a)] Each element of $S$ is in a different proper nonzero subgroup.
	\item[(b)] Two of the elements of $S$ are in the same proper nonzero subgorup.
\end{enumerate}
In Case (a), for $s_1$ and $s_2$ we have $G=\langle s_1,s_2 \rangle$. 
Thus all other elements of $S$ can be presented as $s_i=a_is_1+b_is_2$. 
Since there are $p$ elements in $G$ of the form $as_1+bs_2$ with $a+b\equiv 1\pmod{p}$, and there are $p+1$ elements in $S$, by the pigeonhole principle, one of them  must satisfy $a_i+b_i\not \equiv 1 \pmod{p}$, say $s_3$. Therefore by using \Cref{usefulcor}(1) with $s_1,s_2,s_3$ we are done. 

In Case (b), say $s_3,s_4$ are from the same proper nonzero subgroup of $G$ and $s_1,s_2$ are each from one of the other two proper nonzero subgroups that $S$ intersects. 
Then $s_3=a_3s_1+b_3s_2$ and $s_4=ms_3$ for some integer $m\not \equiv 0,1 \pmod{p}$. Thus, given $a_3+b_3 \not \equiv 1 \pmod{p}$ we use the triplet $s_1,s_2,s_3$ in \Cref{usefulcor}(1). 
Otherwise we conclude that $ma_3+mb_3\equiv m\not \equiv 1 \pmod{p}$ and use the triplet $s_1,s_2,s_4$ in \Cref{usefulcor}(1).
\end{proof}

\section*{Acknowledgments}

The authors thank Yotam Hendel, Jean-Pierre Tignol and the anonymous referee for their comments on the manuscript. Part of the research was carried out in the summer of 2016 during the time the first author was visiting Perimeter Institute of Theoretical Physics and the second author was visiting J\'an Min\' a\v c at the University of Western Ontario.

\section*{Bibliography}
\bibliographystyle{amsalpha}
\bibliography{bibfile}
\end{document}